\documentclass[12pt]{amsart}
\usepackage{amsmath,amssymb,amsthm}
\usepackage{color}
\usepackage{hyperref}

\newtheorem{lem}{Lemma}[section]

\newtheorem{thm}{Theorem}[section]

\theoremstyle{definition}

\theoremstyle{remark}

\theoremstyle{remark}
\newtheorem{remark}{Remark}[section]
\numberwithin{equation}{section}

\newcommand{\C}{{\mathbb C}}

\newcommand{\R}{{\mathbb R}}

\definecolor{blu}{rgb}{0,0,1}

\title{An alternative variational approach to constrained critical points
for nonlinear Schr\"odinger  equations with combined nonlinearities}

\begin{document}

\author{Jacopo Bellazzini}
   \address{Jacopo Bellazzini \newline Dipartimento di Matematica  \\ Universit\`a di Pisa \\ Largo B. Pontecorvo 5, 56100 Pisa, Italy}%
\author{Pietro Majer}
\address{Pietro Majer
\newline Dipartimento di Matematica Universit\`a di Pisa
Largo B. Pontecorvo 5, 56100 Pisa, Italy}%

\begin{abstract} We introduce a new variational framework for the study of standing waves for  nonlinear Schr\"oedinger equations with combined power nonlinearities. The approach is based on minimizing the mass functional over a set defined by an upper bound on the energy and a lower bound on a nonlinear potential term, rather than minimizing the energy at prescribed mass.  This yields a new characterization of the sharp Gagliardo--Nirenberg optimizer through purely variational constraints. As a consequence, we establish the existence of minimizers for all energy levels below a critical threshold and derive the corresponding Euler--Lagrange equations. These minimizers give rise to standing-wave solutions of the nonlinear Schr\"oedinger  equation.\end{abstract}

\maketitle
In this paper we are motivated by  the following  Schr\"odinger type equation
\begin{equation}\label{SP}
i\psi_{t}+ \Delta \psi - |\psi|^{q-1} \psi+|\psi|^{p-1}\psi=0,
\end{equation} 
where $\psi(t,x):[0,T)\times\R^{d}\rightarrow \C$, $d\geq 3$,  is a complex wave function
 and the exponents $q, p$ are of mass subcritical type, meaning that and $1<q< p<1+\frac{4}{d}$.
 Given an initial datum $\psi_0\in H^1(\R^d)$, the Cauchy problem is (globally) well-posed and satisfies conservation of mass and energy, defined respectively by 
\begin{equation}
    M(\psi(t))=\int_{\R^d}|\psi(t,x)|^2dx
\end{equation}
and
\begin{equation}
    E(\psi(t))=\frac12\int_{\R^d}|\nabla \psi(t,x)|^2dx+\frac{1}{q+1}\int_{\R^d}| \psi(t,x)|^{q+1}dx-\frac{1}{p+1}\int_{\R^d}| \psi(t,x)|^{p+1}dx.
\end{equation} 
Particular solutions to \eqref{SP}
are the so-called standing wave solutions, which are   solutions to \eqref{SP} of the form
 \[
 \psi(t,x)=e^{i\omega t}u(x)
 \]
 with $\omega\in \R$, and $u(x)\in \C$ is a time-independent function belonging to  $H^1(\R^d)$ which satisfies 
\begin{equation}\label{eq}
- \Delta u + \omega u +|u|^{q-1} u-|u|^{p-1}u=0.
\end{equation}
As the mass is a physical quantity which is preserved along the flow of \eqref{SP}, in order to find solutions $u$ to \eqref{eq}, a natural way to study the problem is to look for  critical points of the energy functional 
 \[
 E(u)=\frac{1}{2}\int_{\R^{d}}|\nabla u|^{2}dx+\frac{1}{q+1}\int_{\R^{d}}| u|^{q+1}dx-\frac{1}{p+1}\int_{\R^{d}}|u|^{p+1}dx
 \]
constrained on the $L^{2}$-spheres in $H^{1}(\R^{d};\C)$ described by 
\[S_{\rho}=\{u\in H^{1}(\R^{d};\C) \ : \  \|u\|_{2}=\rho\}.
\]
 So, by a solution of \eqref{eq} we mean  a pair 
 $(\omega_{\rho}, u_\rho)\in\R\times H^{1}(\R^{d};\C)$ where $\omega_{\rho}$ appears as the Lagrange multiplier associated to the critical point $u_{\rho}$ on $S_{\rho}$.\medskip
For a fixed  mass $\rho>0$, we define the \emph{ground state energy} the quantity $I_{\rho^2}$, defined as
\begin{equation}\label{mini1}
I_{\rho^2}=\inf_{S_{\rho}} E(u).
\end{equation}

If a minimizer $u_\rho$ to \eqref{mini1} exists, we call it \emph{ground state solution}. 
Concerning the existence of ground states everything is well understood  in arbitrary dimension thanks to the following result:

 \begin{thm}\label{mainthm}[\cite{BFG2}, \cite{JL}]
Let $d\geq 1 $ and $1<q< p<1+\frac{4}{d}$, then there exists a strictly positive mass $\rho_0$ such that:\\
\textup{(i)} $ I_{\rho^2}=0$  for all  $\rho\in (0,\rho_0]$;\\
\textup{(ii)} $ I_{\rho^2}<0$  for all  $\rho\in (\rho_0, \infty)$.\smallskip

\noindent Moreover there are no constrained minimizers for $0<\rho<\rho_0$, and  for all  $\rho\in [\rho_0, \infty)$ there  exists $u_{\rho}\in S_{\rho}$ such that $I_{\rho^2}=E(u_{\rho})$. 
\end{thm}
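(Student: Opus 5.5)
The plan is to study the function $\rho\mapsto I_{\rho^2}$ on $(0,\infty)$ and prove, in order: $I_{\rho^2}$ is finite, nonpositive and nonincreasing; it is negative for large $\rho$ and zero for small $\rho$; minimizers exist exactly when $I_{\rho^2}<0$ or $\rho=\rho_0$.

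\emph{Basic bounds.} By Gagliardo--Nirenberg, $\|u\|_{p+1}^{p+1}\le C\|\nabla u\|_2^{\theta}\|u\|_2^{p+1-\theta}$ with $\theta=d(p-1)/2<2$, since $p<1+4/d$. Hence $E$ is bounded below and coercive on $S_\rho$, so $I_{\rho^2}>-\infty$. Next take the mass-preserving dilations $u_\lambda=\lambda^{d/2}u(\lambda x)$. As $\lambda\to0$ all three terms of $E(u_\lambda)$ tend to $0$, so $I_{\rho^2}\le0$.

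\emph{Monotonicity and the threshold.} A scaling $v=t\,u$ with $t>1$ multiplies the three terms of $E$ by $t^2$, $t^{q+1}$ and $t^{p+1}$ respectively. From this one obtains the weak subadditivity $I_{\rho_1^2+\rho_2^2}\le I_{\rho_1^2}+I_{\rho_2^2}$, and hence that $I$ is nonincreasing. To get $I<0$ for large $\rho$, fix $u$ and use the scaling $u_{\mu}=\mu\,u(\mu^{\alpha}x)$ with $\alpha$ chosen so that the $|u|^{p+1}$ term dominates as the mass grows. To get $I=0$ for small $\rho$, interpolate $\|u\|_{p+1}$ between $\|u\|_{q+1}$ and $\|\nabla u\|_2$ via H\"older plus Sobolev:
\[
\|u\|_{p+1}^{p+1}\le C\|u\|_{q+1}^{a}\|\nabla u\|_2^{b}\|u\|_2^{c},
\]
with $a,b,c\ge 0$. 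Choose the exponents so that Young's inequality gives
\[
\frac1{p+1}\|u\|_{p+1}^{p+1}\le \varepsilon(\rho)\Bigl(\tfrac12\|\nabla u\|_2^2+\tfrac1{q+1}\|u\|_{q+1}^{q+1}\Bigr)
\]
with $\varepsilon(\rho)\to0$ as $\rho\to0$. This gives $E(u)>0$ on $S_\rho$ for $\rho$ small. Then set $\rho_0=\sup\{\rho: I_{\rho^2}=0\}\in(0,\infty)$, and monotonicity gives (i) and (ii).

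\emph{Existence for $\rho>\rho_0$.} Reduce to radial nonincreasing functions by Schwarz symmetrization, which lowers $\|\nabla u\|_2$ and preserves the $L^r$ norms. Take a radial minimizing sequence; it is bounded in $H^1$. Since $q<p$, use compactness of $H^1_{rad}\hookrightarrow L^{p+1}$ (here $d\ge2$; for $d=1$ use concentration-compactness instead), together with weak lower semicontinuity of $\|\nabla u\|_2$ and $\|u\|_{q+1}$. The weak limit $u$ then satisfies $E(u)\le I_{\rho^2}<0$ and $\|u\|_2=\sigma\le\rho$, with $\sigma>0$ because $E(u)<0$.

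\textbf{Main obstacle: ruling out mass loss, $\sigma<\rho$.} The first thing to try is the strict inequality $I_{\rho^2}<I_{\sigma^2}$ for $\sigma<\rho$ whenever $I_{\rho^2}<0$. This should come from the scaling $t\,u$: for $t>1$ and a function with $E<0$, one has $E(tu)<t^2E(u)$ if the $p$-term is large enough relative to the $q$-term. That produces $I_{t^2\sigma^2}<t^2I_{\sigma^2}\le I_{\sigma^2}$ whenever $I_{\sigma^2}<0$, and the case $I_{\sigma^2}=0$ is immediate. Then $E(u)\le I_{\rho^2}<I_{\sigma^2}\le E(u)$ is a contradiction, so $\sigma=\rho$ and $u$ is a minimizer.

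\emph{The endpoints.} For $\rho=\rho_0$, take minimizers $u_n$ at $\rho_n\downarrow\rho_0$, each with $E(u_n)<0$. Their limit does not vanish: the small-mass estimate forces $\|u_n\|_{p+1}$ to stay bounded below. By the same compactness the limit lies in $S_{\rho_0}$ with $E\le0=I_{\rho_0^2}$, so it is a minimizer. For $\rho<\rho_0$, argue by contradiction: a minimizer $u$ with $E(u)=0$, scaled by $t>1$ slightly, would give $I_{(t\rho)^2}<0$ for some $t\rho<\rho_0$. The reason is that a minimizer satisfies the Pohozaev/Euler--Lagrange identities, which make the $p$-term strictly dominant at $t=1$, so $\tfrac{d}{dt}E(tu)|_{t=1}<0$. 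Since $I_{(t\rho)^2}=0$ for $t\rho<\rho_0$, this is a contradiction.
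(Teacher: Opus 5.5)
The paper does not prove this theorem; it quotes it from \cite{BFG2}, \cite{JL}. So there is nothing to compare against, and your outline has to stand on its own. Most of it is the standard argument and works.
\begin{itemize}
\item The bound $I_{t^2\rho^2}\le t^2 I_{\rho^2}$ for $t>1$ gives subadditivity and monotonicity.
\item The interpolation exponents you need for small mass do exist. Take a convex combination of the mass-free inequality \eqref{GN}, of total degree $\alpha+\beta>1$ in $(A,B)$, with the usual $L^2$ Gagliardo--Nirenberg inequality, of degree $d(p-1)/4<1$ in $A$.
\item For $\rho>\rho_0$, the strict inequality $I_{\rho^2}<I_{\sigma^2}$ correctly excludes mass loss. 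Two small corrections: $E(u)<0$ already forces $B<C$, so ``the $p$-term large enough'' is automatic; and after taking infima you only get $I_{t^2\sigma^2}\le t^2 I_{\sigma^2}$, which is enough because $t^2 I_{\sigma^2}<I_{\sigma^2}$.
\end{itemize}

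The genuine gap is the endpoint $\rho=\rho_0$. You claim that ``by the same compactness'' the weak limit $u$ of the minimizers $u_n$ lies in $S_{\rho_0}$. Radial compactness, however, gives strong convergence only in $L^{q+1}$ and $L^{p+1}$, not in $L^2$. The device that excluded mass loss for $\rho>\rho_0$, namely $I_{\rho^2}<I_{\sigma^2}$, fails here, because $I_{\sigma^2}=0=I_{\rho_0^2}$ for every $\sigma\le\rho_0$.

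What closes the argument is strict positivity of $E$ below the threshold. Suppose $u\neq 0$ had $\|u\|_2=\tau<\rho_0$. Then $0\ge E(u)\ge I_{\tau^2}=0$, so $u$ would be a minimizer at a mass below $\rho_0$, which your last step rules out. You should therefore prove nonexistence for $\rho<\rho_0$ first; it does not depend on the endpoint. Then feed it into the endpoint argument. In that nonexistence step you do not need Pohozaev or Euler--Lagrange identities: $E(u)=0$ alone gives $\frac{d}{dt}E(tu)\big|_{t=1}=(1-p)A(u)+(q-p)B(u)<0$.

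Two smaller repairs are needed in the same step.
\begin{itemize}
\item Nonvanishing of $u_n$ does not follow from your small-mass estimate. That estimate is linear in $A+B$ and gives no smallness at mass close to $\rho_0$. Use instead
\[
A_n+B_n<C_n\lesssim A_n^{\alpha}B_n^{\beta}\lesssim (A_n+B_n)^{\alpha+\beta},\qquad \alpha+\beta>1,
\]
which bounds $A_n+B_n$, and hence $C_n$, from below.
\item Monotonicity alone does not give $I_{\rho_0^2}=0$ for $\rho_0=\sup\{\rho: I_{\rho^2}=0\}$. You also need that $\{\rho: I_{\rho^2}<0\}$ is open, which follows from continuity of $t\mapsto E(tu)$.
\end{itemize}

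For $d=1$ you do not need concentration-compactness. Symmetric decreasing functions satisfy $|u(x)|\lesssim |x|^{-d/2}\|u\|_2$ in every dimension, which gives compactness in $L^r$ for $r>2$.
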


In order to state our main results it is  convenient to introduce the notation
\begin{equation}\label{notation}
    \begin{aligned}
        A(u)&:=\frac12\int_{\R^d}|\nabla u(x)|^2dx,\\
B(u)&:=\frac{1}{q+1}\int_{\R^d}| u(x)|^{q+1}dx,\\
C(u)&:=\frac{1}{p+1}\int_{\R^d}|u(x)|^{p+1}dx\\
D(u)&:=\frac{1}{2}\int_{\R^d}| u(x)|^{2}dx.
    \end{aligned}
\end{equation}
such that 
 \[
 E(u)=A(u)+B(u)-C(u)
 \]
 and any standing wave solution $u$ solves
 \[
 \nabla E(u)+\omega \nabla D(u)=0.
 \]
The case $\omega=0$ corresponds to states that are time independent, i.e. static. Such kind of solution exists and it corresponds to the unique positive radially symmetric static solution $W$ that solves
\begin{equation}\label{eqstatic}
- \Delta W +|W|^{q-1} W-|W|^{p-1}W=0,
\end{equation}
whose asymptotic decay at infinity is explicitly given by a power-like tail that decays as $|x|^{\frac{2}{q-1}}$, see \cite{DS}. The assumed range of exponents ensures  that $W\in L^2(\R^d)$. \\
The classical Gagliardo-Nirenberg inequality with sharp constant $S$
\[
\|u\|_{q+1}\leq S \|\nabla u\|_{2}^{\theta}\|u\|_{p+1}^{1-\theta}
 \]
 where $\theta$ is defined by the relation
  \[
\frac{1}{q+1}=\frac{\theta}{2^{\star}}+\frac{1-\theta}{p+2},
 \]
 can be written in terms of $A(u), B(u),C(u)$ as  
 \begin{equation}\label{GN}
 C(u)\leq \frac{A(u)^{\alpha}B(u)^{\beta}}{\sigma}
 \end{equation}
 where
 \[
\sigma=\inf_{u\neq 0}\frac{A(u)^{\alpha}B(u)^{\beta}}{C(u)}
 \]
 and 
 \begin{equation}\label{relaz a-b}
 \alpha=\frac{2^{\star}}{2}\left(\frac{p-q}{2^{\star}-(q+1)}\right), \ \beta=\frac{2^{\star}-(p+1)}{2^{\star}-(q+1)},\ \sigma=\frac{p+1}{(q+1)^{\beta}}2^{-\alpha}S^{-(p+1)}.
 \end{equation}
 Note that the following (crucial) identity holds
\begin{equation}\label{relaz a e b}
\alpha+\beta=1+\frac{2\alpha}{d}.
\end{equation}
Now we introduce two relevant quantitites that are relevant for our first result.\\
We define 
\begin{equation}\label{def degli star}
\lambda_{\star}=\frac{2}{d}\beta^{-\frac{\beta}{\alpha+\beta-1}}\alpha^{\frac{\beta-1}{\alpha+\beta-1}}\sigma^{\frac{1}{\alpha+\beta-1}},\  \ C_{\star}=\frac{d}{2\alpha}\lambda_{\star}.
\end{equation}
such that
\begin{equation}\label{prop cstar}
C_{\star}^{\alpha+\beta-1}=\alpha^{-\alpha}\beta^{-\beta}\sigma.
\end{equation}
Our first result characterizes the energy level of the unique positive static solution that solves \eqref{eqstatic}.
\begin{thm}\label{thmfond}
Let $d\geq 3 $ and let us assume that $u\in \dot H^1(\R^d)\cap L^{q+1}(\R^d)$ fulfills
\[
E(u)\leq \lambda_{\star}, \ C(u)=C_{\star},
 \]
then $u$ is a critical point of $E$ at the level $\lambda_{\star}$ and it attains the sharp constant in GN inequality, i.e.
\begin{equation}\label{primoteo}
\left\{\begin{aligned}
&\nabla E(u)=0,\\
&E(u)=\lambda_{\star}, \\
&C(u)=C_{\star}=\frac{A(u)^{\alpha}B(u)^{\beta}}{\sigma}=\frac{A(u)}{\alpha}=\frac{B(u)}{\beta}.
\end{aligned}\right.
\end{equation}
Moreover, up to translations and multiplication by a constant phase, $u$ coincides with the unique positive radial solution $W$.
\end{thm}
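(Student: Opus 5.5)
The idea is to combine the Gagliardo--Nirenberg inequality \eqref{GN} with a weighted AM--GM inequality, so that the constraint $C(u)=C_\star$ forces equality everywhere. Set $a=A(u)$, $b=B(u)$, $c=C(u)=C_\star$. By \eqref{GN}, $\sigma c\le a^\alpha b^\beta$. Write
\[
a^\alpha b^\beta=\alpha^\alpha\beta^\beta\left(\frac{a}{\alpha}\right)^\alpha\left(\frac{b}{\beta}\right)^\beta .
\]
Since $\alpha+\beta=1+\frac{2\alpha}{d}>1$ by \eqref{relaz a e b}, weighted AM--GM with weights $\frac{\alpha}{\alpha+\beta}$ and $\frac{\beta}{\alpha+\beta}$ gives
\[
\left(\frac{a}{\alpha}\right)^\alpha\left(\frac{b}{\beta}\right)^\beta\le\left(\frac{a+b}{\alpha+\beta}\right)^{\alpha+\beta}.
\]
Hence
\[
\sigma C_\star\le \alpha^\alpha\beta^\beta\left(\frac{a+b}{\alpha+\beta}\right)^{\alpha+\beta}.
\]
By \eqref{prop cstar}, $\sigma=\alpha^\alpha\beta^\beta C_\star^{\alpha+\beta-1}$, so this simplifies to $C_\star^{\alpha+\beta}\le\left(\frac{a+b}{\alpha+\beta}\right)^{\alpha+\beta}$, that is, $a+b\ge(\alpha+\beta)C_\star$. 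Therefore
\[
E(u)=a+b-C_\star\ge(\alpha+\beta-1)C_\star=\frac{2\alpha}{d}C_\star=\lambda_\star,
\]
using the definition \eqref{def degli star}. Together with the hypothesis $E(u)\le\lambda_\star$, all of these inequalities must be equalities. Equality in AM--GM gives $a/\alpha=b/\beta$. With $a+b=(\alpha+\beta)C_\star$ this yields $a/\alpha=b/\beta=C_\star$. Equality in \eqref{GN} gives $C_\star=a^\alpha b^\beta/\sigma$. This establishes the second and third lines of \eqref{primoteo}. Before the proof I will double-check these identities: $\lambda_\star=\frac{2\alpha}{d}C_\star$ is immediate from \eqref{def degli star}, and \eqref{prop cstar} is taken as given.

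Next, $\nabla E(u)=0$. Here I would use the minimality that equality provides. The point $u$ minimizes $E$ on the set $\{C=C_\star\}$, because the argument above shows $E\ge\lambda_\star$ for every admissible $v$ with $C(v)=C_\star$. So $u$ is a constrained critical point, and by Lagrange multipliers $\nabla E(u)=\mu\nabla C(u)$; this is legitimate since $\nabla C(u)\neq0$. To show $\mu=0$, I test against the two scalings $u(\cdot/s)$ and $tu$. Equivalently, I would use that $u$ maximizes the Weinstein-type quotient $C/(A^\alpha B^\beta)$ and take the Euler--Lagrange equation of that functional directly:
\[
\alpha\frac{\nabla A}{A}+\beta\frac{\nabla B}{B}=\frac{\nabla C}{C}.
\]
Substituting $A=\alpha C_\star$, $B=\beta C_\star$ and multiplying by $C_\star$ gives $\nabla A+\nabla B-\nabla C=0$, i.e.\ $\nabla E(u)=0$. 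This route is cleaner than the Lagrange multiplier computation, and it only needs differentiability of the quotient on $\dot H^1\cap L^{q+1}$. Note that $C$ is finite there: $p+1$ lies between $q+1$ and $2^\star$, so interpolation controls it.

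Finally, for uniqueness: $|u|$ also attains equality in \eqref{GN}, since $A(|u|)\le A(u)$ and $B$, $C$ are unchanged. So $|u|$ satisfies the same equalities and is a nonnegative solution of \eqref{eqstatic}. Equality in the diamagnetic inequality $|\nabla|u||\le|\nabla u|$, together with the fact that $u$ has no zeros (by the strong maximum principle applied to $|u|$), gives $u=e^{i\theta}|u|$ with $\theta$ constant. By the strong maximum principle, $|u|>0$. Symmetrization (P\'olya--Szeg\H{o}) combined with the equality case, or alternatively Gidas--Ni--Nirenberg-type results for zero-mass equations, shows $|u|$ is radial up to translation. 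The uniqueness result of \cite{DS} then identifies it with $W$.

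I expect this last step to be the main obstacle. Radial symmetry and the rigidity of the phase require care for a zero-mass problem in $\dot H^1\cap L^{q+1}$, and for this I would rely on the literature on ground states of \eqref{eqstatic}.
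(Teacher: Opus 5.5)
Your derivation of $E(u)=\lambda_\star$, $A/\alpha=B/\beta=C_\star=A^\alpha B^\beta/\sigma$ and $\nabla E(u)=0$ is the paper's argument. Both use GN plus weighted AM--GM, force the equalities via $\alpha+\beta=1+\frac{2\alpha}{d}$ and $C_\star^{\alpha+\beta-1}=\alpha^{-\alpha}\beta^{-\beta}\sigma$, and then take the Euler--Lagrange equation of $\ln(A^\alpha B^\beta/C)$. Where you genuinely differ is the identification with $W$.

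The paper goes through ground-state theory. Nehari and Pohozaev give $A=\alpha C$, $B=\beta C$, $C=\frac{d}{2\alpha}E$ for every nontrivial critical point, so GN yields $E\ge\lambda_\star$ for all of them. This makes $u$ a least-energy solution, hence (after rescaling) a Berestycki--Lions constrained minimizer. Such a minimizer is radial by Lopes, positive by Berestycki--Lions, and unique by Kwong--Zhang.

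You instead work directly with $u$ as a GN optimizer. The function $|u|$ is again an optimizer with $A(|u|)=A(u)$. The strong maximum principle gives $|u|>0$, equality in the diamagnetic inequality then fixes a constant phase, and rearrangement gives radial symmetry.

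Your route makes the complex phase explicit, whereas the paper only says ``positive''. It also avoids the tacit step that a least-energy solution is a rescaled Berestycki--Lions minimizer. The paper's route gets symmetry from Lopes' reflection method, so it needs no equality case for rearrangements.

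If you use P\'olya--Szeg\H{o}, the Brothers--Ziemer equality case requires $\{w'=0\}\cap\{0<w<\max w\}$ to be null for $w=|u|^*$. This holds. Indeed, $w$ is itself an optimizer, hence a positive radial solution of $w''+\frac{d-1}{r}w'=w^q-w^p$. At any $r_0>0$ with $w'(r_0)=0$, either $w''(r_0)\neq 0$ or $w(r_0)=1$, and the latter forces $w\equiv 1$ by ODE uniqueness. So such $r_0$ are isolated.
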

Our second result introduces an alternative variational construction of critical points of the energy functional with a mass constraint. Here, instead to minimize the non convex functional $E$ on the $L^2$ sphere $\{D=\rho\}$, we choose to minimize the convex functional $D$ on a non convex constraint set  given by the conditions $E\leq \lambda, \ C\geq C_{\star}$, $\lambda<\lambda_{\star}$. Of course, for $u$ with $D(u)=\rho$ and $E(u)=\lambda$, minimizing $D$ on the set $\{E\leq \lambda, \ C\geq C_{\star}\}$is the same as minimizing $E$ on the set $\{D\leq \rho, \ C\geq C_{\star}\}$, both facts meaning   \[\{D\leq \rho,E\leq \lambda,   \ C\geq C_{\star}\}=\{D= \rho,E=\lambda,   \ C\geq C_{\star}\}. \]

As will become clear later, the characterization  of the set $E\leq \lambda, \ C\geq C_{\star}$ given by Theorem \ref{thmfond} will be crucial in the proof of the following theorem.
\begin{thm}\label{thmmain2}
Let $d\geq 3 $ and $1<q< p<1+\frac{4}{d}$. For every $\lambda \in (-\infty, \lambda_{\star})$, there exists $u\in H^1_{rad}(\R^d)$ that is a global minimizer of $D$ on the set
\[
\Omega_{\lambda}:=\{v\in H^1_{rad}(\R^d, \ E(v)\leq \lambda, \ C(v)\geq C_{\star}\},
 \]
that satisfies 
\begin{equation}\label{secondoteo}
\left\{\begin{aligned}
&\nabla E(u)+\omega \nabla D(u)=0,\\
& \omega>0,\\
&E(u)=\lambda, \\
&C(u)>C_{\star}.
\end{aligned}\right.
\end{equation}
\end{thm}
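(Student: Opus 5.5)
We minimize $D$ over $\Omega_\lambda$ by the direct method. We then use Theorem \ref{thmfond} to rule out the degenerate case $C(u)=C_\star$ in the Lagrange multiplier rule. Finally, a scaling argument forces the energy constraint to be active and the multiplier to be positive.

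\emph{Step 1 (nonemptiness and compactness).} First, $\Omega_\lambda\neq\emptyset$. Take $W$ and consider $W_{t}=tW$ with $t>1$, or the mass-preserving rescaling $t^{d/2}W(t\cdot)$. Since $W$ is a critical point with $E(W)=\lambda_\star$ and $C(W)=C_\star$, a suitable one-parameter deformation raises $C$ and lowers $E$ below any given $\lambda$. For instance, $tW$ has $E(tW)=t^2A+t^{q+1}B-t^{p+1}C\to-\infty$ while $C(tW)\to\infty$.

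Next, let $u_n$ be a minimizing sequence and set $m=\inf_{\Omega_\lambda}D$. We need a bound on $A(u_n)$. Since $p<1+4/d$, Gagliardo--Nirenberg with the $L^2$ norm gives $C(u)\le K A(u)^{d(p-1)/4}D(u)^{\dots}$ with exponent $<1$. Hence $A(u_n)\le \lambda+C(u_n)$ implies a uniform $H^1$ bound, since $D(u_n)$ is bounded.

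By compactness of $H^1_{rad}\hookrightarrow L^{r}$ for $2<r<2^*$ (Strauss), we get $u_n\rightharpoonup u$ with $B(u_n)\to B(u)$ and $C(u_n)\to C(u)$. Thus $C(u)\ge C_\star>0$, so $u\neq0$. By weak lower semicontinuity of $A$ and $D$, $E(u)\le\lambda$ and $D(u)\le m$. Hence $u$ is a minimizer.

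\emph{Step 2 (the constraint $C\ge C_\star$ is inactive).} Suppose $C(u)=C_\star$. Then $E(u)\le\lambda<\lambda_\star$, and $u\in\dot H^1\cap L^{q+1}$. By Theorem \ref{thmfond}, $E(u)=\lambda_\star$, a contradiction. Thus $C(u)>C_\star$. This is exactly where the sharp characterization enters.

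\emph{Step 3 (the energy constraint is active; multiplier rule).} Now $u$ is a local minimizer of $D$ under the single constraint $E\le\lambda$. If $E(u)<\lambda$, then $(1-\epsilon)u$ stays in $\Omega_\lambda$ for small $\epsilon$, by continuity and since $C(u)>C_\star$ strictly. But $D((1-\epsilon)u)<D(u)$, a contradiction. Hence $E(u)=\lambda$.

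We need $\nabla E(u)\neq0$. If $\nabla E(u)=0$, then $u$ is a static solution. By Pohozaev/uniqueness considerations this forces $u=W$ up to symmetries, hence $E(u)=\lambda_\star$. Alternatively, we can apply Theorem \ref{thmfond} to $u$ rescaled... The cleanest route: static solutions satisfy Pohozaev and Nehari identities, which give $C=C_\star$ and $E=\lambda_\star$ via the uniqueness of $W$ cited from \cite{DS}. This contradicts $E(u)=\lambda<\lambda_\star$.

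The Lagrange rule then gives $\mu\ge0$ with $\nabla D(u)+\mu\nabla E(u)=0$, and $\mu\neq0$ since $u\neq0$. Setting $\omega=1/\mu>0$ gives $\nabla E(u)+\omega\nabla D(u)=0$. Radial criticality extends to full criticality by Palais' symmetric criticality principle.

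The positivity of $\omega$ comes from the sign of the Karush--Kuhn--Tucker multiplier for an inequality constraint. One also checks directly that the pairing $\langle\nabla E(u),u\rangle=-2\omega D(u)$ is consistent, using $\langle\nabla E(u),u\rangle<0$: the direction $u$ decreases $D$, so it must increase $E$ at first order to be forbidden. That is, $\langle\nabla E(u),u\rangle\le0$ is impossible unless $\omega>0$.

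\emph{Main obstacle.} The hardest point is excluding $\nabla E(u)=0$ and rigorously settling the multiplier's sign. I would first try the Nehari/Pohozaev computation showing any nontrivial static $H^1$ solution has $E=\lambda_\star$.
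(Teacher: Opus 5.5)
Your architecture is the paper's: the direct method with the compact radial embedding, Theorem~\ref{thmfond} to get $C(u)>C_{\star}$, a scaling argument forcing $E(u)=\lambda$, and a multiplier rule. Your local variations are fine:
\begin{itemize}
\item the $(1-\epsilon)u$ argument makes explicit the paper's remark that otherwise $u$ would be a free critical point of $D$;
\item the Karush--Kuhn--Tucker sign $\mu\ge 0$ is legitimate once $\nabla E(u)\neq 0$, since $\{C>C_{\star}\}$ is open. It generalizes the paper's computation of $\frac{d}{dt}E(tu)$ at $t=1$, which is the same sign argument in the particular direction $-u$;
\item Palais' principle supplies a point the paper leaves implicit.
\end{itemize}

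The step that fails as written is the exclusion of $\nabla E(u)=0$, which you need as the constraint qualification. You argue that a static solution must coincide with $W$ by uniqueness, and that Nehari and Pohozaev give $E=\lambda_{\star}$ for every nontrivial static solution. Neither holds up:
\begin{itemize}
\item The uniqueness theorem concerns \emph{positive} radial solutions, and nothing in your construction shows $u>0$.
\item The Nehari--Pohozaev system does not fix the energy. It only yields $A(u)=\alpha C(u)$ and $B(u)=\beta C(u)$, hence $E(u)=(\alpha+\beta-1)C(u)=\frac{2\alpha}{d}C(u)$ by the identity $\alpha+\beta=1+\frac{2\alpha}{d}$.
\end{itemize}
The missing step is to combine this relation with what you already proved. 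By Step~2, $C(u)>C_{\star}$, so if $\nabla E(u)=0$ then $E(u)>\frac{2\alpha}{d}C_{\star}=\lambda_{\star}$. This contradicts $E(u)=\lambda<\lambda_{\star}$.

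The paper argues differently: it inserts $A=\alpha C$, $B=\beta C$ into \eqref{GN} to get $C\ge C_{\star}$, hence $E\ge\lambda_{\star}$, for every nontrivial critical point. Your uniqueness idea could also be rescued by replacing $u$ with $|u|$, which is again a minimizer, and using the strong maximum principle, but that is far heavier. With this repair your proof is complete.
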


\begin{remark} The restriction to the radial space in Theorem~0.3 is used only to recover compactness. By restricting the minimization problem to $H^1_{\mathrm{rad}}(\mathbb R^d)$, we can use the compact embedding \[ H^1_{\mathrm{rad}}(\mathbb R^d) \hookrightarrow L^r(\mathbb R^d), \qquad 2<r<2^*. \] This yields the strong convergence of a minimizing sequence in $L^{q+1}(\mathbb R^d)$ and $L^{p+1}(\mathbb R^d)$, which is needed to pass to the limit in the nonlinear functionals $B$ and $C$. This compactness is crucial to ensure that the weak limit of a minimizing sequence remains in the constraint set. \end{remark}

\begin{remark} The variational characterization established in Theorem~\ref{thmmain2} appears to be particularly relevant in the context of nonlinear Schr\"odinger equations with competing nonlinearities. Indeed, the global minimizers for the mass on the set $\Omega(\lambda)$ constructed in Theorem~\ref{thmmain2}  correspond to standing waves  \[
 \psi(t,x)=e^{i\omega t}u(x)
 \]  for NLS equation \[ i\partial_t\psi+\Delta\psi = |\psi|^{q-1}\psi-|\psi|^{p-1}\psi, \qquad 1+\frac{2}{d}<q<p<1+\frac{4}{d}. \] 
Indeed Theorem ~\ref{thmmain2} shows that standing waves with mass strictly smaller than the threshold mass $\rho_0$ given by Theorem \ref{mainthm} exist. 
This fact is interesting is we connect our result with the paper by  Fukaya and Hayashi, see \cite{FH}, where they  proved that the  standing waves associated with the above equation can be dynamically unstable (under certain condition on $p$ and $q$), and where  they provide that the limiting profile of branches of positive-frequency ground states is the static solution $W$. Their analysis relies on a detailed variational description of the zero-frequency elliptic problem. Our results provide an alternative characterization of the same limiting profile through the threshold pair $(\lambda_*,C_*)$, showing that the static solution is the unique function satisfying the simultaneous constraints $E(u)\le \lambda_*$ and $C(u)=C_*$. This offers a new geometric interpretation of the distinguished role played by the algebraic ground state in the dynamics of the equation. \end{remark}
\begin{remark} Let $u_0$ be the ground state provided by Theorem~0.1, so that \[ E(u_0)=0, \qquad D(u_0)=\frac{\rho_0^2}{2}. \] We claim that \[ C(u_0)>C_*. \] Indeed, if $C(u_0)<C_*$, we can choose $t>1$ such that \[ C(tu_0)=C_*. \] Since $E(u_0)=0$, we have $ A(u_0)+B(u_0) = C(u_0), $ and therefore \[ E(tu_0) = (t^2-t^{p+1})\,A(u_0) + (t^{q+1}-t^{p+1})\,B(u_0). \] Because $p+1>q+1>2$ and $t>1$, both coefficients are negative, hence \[ E(tu_0)<0<\lambda_*. \] Since $C(tu_0)=C_*$, Theorem~0.2 would imply that $tu_0=W$ and therefore \[ E(tu_0)=E(W)=\lambda_*, \] a contradiction. Thus $C(u_0)\ge C_*$. The equality $C(u_0)=C_*$ is also impossible. Indeed, since $E(u_0)=0<\lambda_*$, Theorem~0.2 would again imply $u_0=W$, yielding \[ 0=E(u_0)=E(W)=\lambda_*, \] which is absurd. Consequently, \[ C(u_0)>C_*. \] It follows that $u_0\in\Omega(\lambda)$ for every $\lambda>0$. Hence for $0<\lambda<\lambda_*$,  the mass $\rho(\lambda)$ of the minimizer $u_\lambda$  fulfills \[ \rho(\lambda)\le \rho_0. \] Since every minimizer $u_\lambda$ of $\rho(\lambda)$ satisfies $E(u_\lambda)=\lambda$, the function $u_0$ cannot realize the minimum. Therefore, \[ \rho(\lambda)<\rho_0, \] for every $\lambda>0$, showing that the standing waves provided by Theorem~0.3 have mass strictly smaller than the threshold mass $\rho_0$ of Theorem~0.1. 
\end{remark}

\begin{remark}
 Since the seminal work by Tao, Vi\c san, and Zhang  \cite{TVZ}, the NLS with combined nonlinearities has attracted a lot of attention. Concerning  the existence of standing waves when the leading nonlinearity is mass super-critical it is worth mentioning at least the following recent works \cite{BFG23, SoaveJDE, SoaveJFA}, and references therein. 
\end{remark}

 \begin{remark} A further motivation for the present work comes from recent scattering results for nonlinear Schr\"odinger equations with combined mass-subcritical nonlinearities. In the recent paper \cite{BFG2} Bellazzini, Forcella and Georgiev proved a scattering result for datum belonging to $\Sigma$, i.e. the subspace of $H^1(\mathbb R^d)$ with finite variance. More  precisely  the authors proved that solutions scatter whenever the initial mass is sufficiently small, without requiring a smallness assumption on the whole $\Sigma$-norm. The sharp size of this threshold mass for scattering remains an open problem.  If $d\geq 3$ and taken $\lambda \in (-\infty, \lambda_{\star})$ and calling $u_{\lambda}$ the minimizer for the mass  given by Theorem~\ref{thmmain2} we proved that exist non scattering states with  mass strictly smaller then the  threshold mass $\rho_0$ given by Theorem \ref{mainthm}.
 \end{remark}

\section{Proof of Theorem \ref{thmfond}}
We recall an elementary inequality.
\begin{lem}\label{clemma}
For $x,y,\alpha,\beta\in \R^{+}$
\[
x^{\alpha}y^{\beta}\leq \frac{\alpha^{\alpha}\beta^{\beta}}{(\alpha+\beta)^{\alpha+\beta}}(x+y)^{\alpha+\beta},
 \]
 with equality iff $\frac{x}{\alpha}=\frac{y}{\beta}$.
\end{lem}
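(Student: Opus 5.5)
The plan is to reduce the inequality to the weighted AM--GM inequality, equivalently to the concavity of the logarithm. First, both sides are positively homogeneous of degree $\alpha+\beta$ in the pair $(x,y)$. So after dividing by $(x+y)^{\alpha+\beta}$ we may assume $x+y=1$. We then write $x=t$ and $y=1-t$ with $t\in(0,1)$. The claim becomes
\[
f(t):=t^{\alpha}(1-t)^{\beta}\leq \frac{\alpha^{\alpha}\beta^{\beta}}{(\alpha+\beta)^{\alpha+\beta}}.
\]

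Next, set $g(t)=\log f(t)=\alpha\log t+\beta\log(1-t)$. This function is strictly concave on $(0,1)$, since $g''(t)=-\alpha/t^2-\beta/(1-t)^2<0$, and it tends to $-\infty$ at both endpoints. Hence $g$ has a unique maximum point, namely the unique zero of $g'(t)=\alpha/t-\beta/(1-t)$. That zero is $t_0=\alpha/(\alpha+\beta)$, so $1-t_0=\beta/(\alpha+\beta)$.

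Evaluating at $t_0$ gives
\[
f(t_0)=\frac{\alpha^\alpha\beta^\beta}{(\alpha+\beta)^{\alpha+\beta}},
\]
which is exactly the stated constant. By strict concavity, equality holds only at $t=t_0$. Undoing the normalization, $t=t_0$ means $x/(x+y)=\alpha/(\alpha+\beta)$ and $y/(x+y)=\beta/(\alpha+\beta)$. This is equivalent to $x/\alpha=y/\beta$, which is the stated equality condition.

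An equivalent one-line route is weighted AM--GM with weights $\alpha/(\alpha+\beta)$ and $\beta/(\alpha+\beta)$ applied to the numbers $x/\alpha$ and $y/\beta$:
\[
\Big(\frac{x}{\alpha}\Big)^{\frac{\alpha}{\alpha+\beta}}\Big(\frac{y}{\beta}\Big)^{\frac{\beta}{\alpha+\beta}}\leq \frac{x+y}{\alpha+\beta}.
\]
Raising both sides to the power $\alpha+\beta$ gives the inequality. Equality in AM--GM holds iff the two numbers coincide, which again is $x/\alpha=y/\beta$.

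There is no real obstacle here. The only points needing care are the reduction via homogeneity and keeping track of the strictness needed for the equality case.
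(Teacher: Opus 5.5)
Your proof is correct and takes essentially the same route as the paper: normalize to $x+y=1$ by homogeneity, then maximize a one-variable function on $(0,1)$ at $t_0=\alpha/(\alpha+\beta)$. The paper rescales the exponents and appeals to concavity of $\tilde x^{\tilde\alpha}(1-\tilde x)^{1-\tilde\alpha}$, whereas your strictly concave logarithm makes the uniqueness of the maximizer, and hence the equality case, explicit.
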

\begin{proof}
Calling $\tilde x=\frac{x}{x+y}$, $\tilde y=\frac{y}{x+y}$, $\tilde \alpha=\frac{\alpha}{\alpha+\beta}$, $\tilde \beta=\frac{\beta}{\alpha+\beta}$, we have
$\tilde x+\tilde y=1$, $\tilde \alpha+\tilde \beta=1$.
The concave function $f(\tilde x)=\tilde x^{\tilde \alpha}(1-\tilde x)^{1-\tilde \alpha}$ attains the maximum in $[0,1]$ at $\tilde x=\tilde \alpha$ such that
$f(\tilde x)\leq {\tilde \alpha}^{\tilde \alpha }(1-\tilde \alpha)^{1-\tilde \alpha }$. Hence we have
\[
(\frac{x}{x+y})^\frac{\alpha}{\alpha+\beta}(\frac{y}{x+y})^\frac{\beta}{\alpha+\beta}\leq (\frac{\alpha}{\alpha+\beta})^\frac{\alpha}{\alpha+\beta}(\frac{\beta}{\alpha+\beta})^\frac{\beta}{\alpha+\beta},
 \]
 which implies 
 \[
x^{\alpha}y^{\beta}\leq \frac{\alpha^{\alpha}\beta^{\beta}}{(\alpha+\beta)^{\alpha+\beta}}(x+y)^{\alpha+\beta},
 \]
 with equality iff $\frac{x}{\alpha}=\frac{y}{\beta}$.
\end{proof}
Now let us consider $u\in \dot H^1(\R^d)\cap L^{q+1}(\R^d)$ fulfilling
\[
E(u)\leq \lambda_{\star}, \ C(u)=C_{\star}.
 \]
Then, by GN inequality \eqref{GN} and Lemma \ref{clemma}, hiding the dependence on $u$, we obtain
\[
C_{\star}=C\leq \sigma^{-1} A^{\alpha}B^{\beta}\leq  \sigma^{-1}\frac{\alpha^{\alpha}\beta^{\beta}}{(\alpha+\beta)^{\alpha+\beta}}(A+B)^{\alpha+\beta}=
 \]
 \[
=\sigma^{-1}\frac{\alpha^{\alpha}\beta^{\beta}}{(\alpha+\beta)^{\alpha+\beta}}(C+E)^{\alpha+\beta}\leq \sigma^{-1}\frac{\alpha^{\alpha}\beta^{\beta}}{(\alpha+\beta)^{\alpha+\beta}}(C_{\star}+\frac{2\alpha}{d}C_{\star})^{\alpha+\beta},
 \]
 where we have used in the last line the relation $\lambda_{\star}=\frac{2\alpha}{d}C_{\star}$.

Hence  we obtain
  \[
C_{\star}\leq \sigma^{-1}\ \frac{\alpha^{\alpha}\beta^{\beta}}{(\alpha+\beta)^{\alpha+\beta}}(C_{\star}+\frac{2\alpha}{d}C_{\star})^{\alpha+\beta}= \sigma^{-1}\frac{\alpha^{\alpha}\beta^{\beta}}{(\alpha+\beta)^{\alpha+\beta}}(C_{\star}(1+\frac{2\alpha}{d}))^{\alpha+\beta}=
 \]
\[
=\sigma^{-1}\frac{\alpha^{\alpha}\beta^{\beta}}{(\alpha+\beta)^{\alpha+\beta}}(C_{\star}(\alpha+\beta))^{\alpha+\beta}=C_{\star}.
 \]
 thanks to the relation \eqref{relaz a e b} and \eqref{prop cstar}. Therefore the above are in fact equalities and in particular
 \[ 
 A^{\alpha}B^{\beta}= \frac{\alpha^{\alpha}\beta^{\beta}}{(\alpha+\beta)^{\alpha+\beta}}(A+B)^{\alpha+\beta}
 \]
 and thus $\frac{A}{\alpha}=\frac{B}{\beta}$. Calling $\frac{A}{\alpha}=\frac{B}{\beta}=c$ we get $A+B=(\alpha+\beta)c$  such that
\[ 
 C_{\star}=C=\sigma^{-1} \alpha^{\alpha}\beta^{\beta}c^{\alpha+\beta}
 \]
 
that implies, thanks to \eqref{prop cstar}, $c=C=\frac{A}{\alpha}=\frac{B}{\beta}$. Finally, thanks to the fact that $C=\sigma^{-1}A^{\alpha}B^{\beta}$, $u$ attains the sharp constant
in Gagliardo-Nirenberg inequality and minimizes the ratio $\frac{A^{\alpha}B^{\beta}}{C}$. Thus
\[
0=\nabla (\ln \frac{A^{\alpha}B^{\beta}}{C})=\frac{\alpha}{A}\nabla A+\frac{\beta}{B}\nabla B-\frac{1}{C}\nabla C=\frac{1}{C}\left(\nabla E\right).
\]
Now to conclude that $u=W$ (modulo space and phase shift) it sufficies to notice that 
\begin{itemize}
\item any solution $u\in \dot H^1(\R^d)\cap L^{q+1}(\R^d)$ to the equation $\nabla E(u)=0$ shall fulfill the relations
 \begin{equation}
\left\{\begin{aligned}
&2A(u)+(q+1)B(u)-(p+1)C(u)=0,\\
&(d-2)A(u)+dB(u)-dC(u)=0,
\end{aligned}\right.
\end{equation}
that are equivalent to
 \begin{equation}
\left\{\begin{aligned}
&A(u)=\alpha C(u),\\
&B(u)=\beta C(u),\\
&C(u)=\frac{d}{2\alpha}E(u).
\end{aligned}\right.
\end{equation}
\item the GN inequality implies
\[
\frac{A(u)^{\alpha}B(u)^{\beta}}{C(u)}=(\frac{d}{2})^{\alpha+\beta-1}\beta^{\beta}\alpha^{1-\beta}E(u)^{\alpha+\beta-1}\geq \sigma,
\]
and hence an a priori bound of energy of non trivial critical point of $E$ is given by $E\geq \lambda_{\star}.$
\item the optimizers for GN can be obtained also by minimizing (a la Berestycki-Lions \cite{BL}) $A(u)$ on the constraint $B(u)-C(u)=1$ and then rescaling to remove the Lagrange multiplier.
\item any minimizer for  $A(u)$ on the constraint $B(u)-C(u)=1$ is radially symmetric (Lopes \cite{L}) and it is positive ( Berestycki-Lions \cite{BL}) hence it is unique (Kwong-Zhang \cite{KZ}).
\end{itemize}
\section{Proof of Theorem \ref{thmmain2}}
 Notice that $\Omega(\lambda)\neq\emptyset$ for every $\lambda\in\mathbb R$. Indeed, for any $v\in H^1(\mathbb R^d)\setminus\{0\}$, \[ E(tv) = t^2A(v) +t^{q+1}B(v) -t^{p+1}C(v) \to -\infty \] as $t\to\infty$, while $C(tv)\to+\infty$. Therefore $tv\in\Omega(\lambda)$ for $t$ large enough. Now let us consider a minimizing sequence $u_{j}$ for $D$ on $\Omega_{\lambda}$ and denote $A_j=A(u_j)$,  $B_j=B(u_j)$, $C_j=C(u_j)$, $D_j=D(u_j)$, thus
\[
D_j \rightarrow d_{\lambda}:=\inf_{\Omega_{\lambda}}D.
\]
By the fact that $u_j \in \Omega_{\lambda}$ we have $A_j<C_j+\lambda$ and hence by GN inequality
\[
C_j \leq s_1A_j^{\frac{d(p-1)}{4}}D_{j}^{\frac{2(p+1)-d(p-1)}{4}}=O(C_j^{\frac{d(p-1)}{4}}).
\]
The mass subcritical condition $p<1+\frac{4}{d}$ guarantees hence that $C_j$ is bounded as well as  $A_j$ and that $u_j\rightharpoonup u$ in $H^1_{rad}(\R^d)$.
Denoting   $A=A(u)$,  $B=B(u)$, $C=C(u)$, $D=D(u)$ we have, by the compact embedding of radial function, 
 \begin{equation}
\left\{\begin{aligned}
&A_j\rightarrow a\geq A,\\
&B_j\rightarrow B,\\
&C_j \rightarrow C\\
&D_j \rightarrow d_{\lambda}\geq D.
\end{aligned}\right.
\end{equation}
Thus $u \in \Omega_{\lambda}$ and $D\leq \inf_{\Omega_{\lambda}}D$, so $D=\delta_{\lambda}$ and the infimum is achieved. We have also that  $C(u)>C_{\star}$
thanks to Theorem \ref{thmfond} because $\lambda<\lambda_{\star}$. Moreover if we assume $E(u)<\lambda$, $u$ is forced to be  a free critical point of $D$ which is the trivial function, 
therefore $E(u)=\lambda$.
Moreover $\nabla E(u)\neq 0$ by Theorem \ref{thmfond} and by Lagrange multiplier principle we have for some $\omega\neq 0$
\[
\nabla E+\omega \nabla D=0.
\]
Now, in order to prove that $\omega>0$, we argue by contradiction  assuming  that $\omega<0$. For $0<t<1$ certainly $tu \notin \Omega_{\lambda}$ because  $D(tu)=t^2D(u)<D(u)=\inf_{\Omega_{\lambda}}D $, and since  $C(tu)>C_*$ for  $t$ close to $1$ one has $E(tu) >\lambda$ for all $t<1$ close to $1$. Thus $\frac{d}{dt}E(tu)\big|_{t=1}=(\nabla E(u)\cdot u)\leq 0$, whence from $(\nabla E(u)\cdot u)+\omega (\nabla D(u)\cdot u)=0$ we have $\frac{d}{dt}E(tu)\big|_{t=1}=-\omega (\nabla D(u)\cdot u)>0.$

\end{document}